\documentclass[10pt]{amsart}
\usepackage{amsmath, amsthm, amsfonts, amssymb}
\usepackage{enumerate}
\usepackage[bookmarks=false]{hyperref}

\usepackage{xcolor}

\usepackage{verbatim}

\usepackage{comment}

\newtheorem{thm}{Theorem}[section]
\newtheorem{prop}[thm]{Proposition}
\newtheorem{col}[thm]{Corollary}

\theoremstyle{definition}
\newtheorem{defn}[thm]{Definition}
\newtheorem{defn/lem}[thm]{Definition/Lemma}
\newtheorem{rmk}[thm]{Remark}

\newtheorem{examples}[thm]{Examples}

\newcommand{\C}{\ensuremath{\mathbb{C}}}

\newcommand{\M}{\ensuremath{\mathbb{M}}}
\newcommand{\N}{\ensuremath{\mathbb{N}}}
\newcommand{\Z}{\ensuremath{\mathbb{Z}}}
\newcommand{\F}{\ensuremath{\mathbb{F}}}
\newcommand{\cU}{\ensuremath{\mathcal{U}}}

\newcommand{\sk}{^{(k)}}
\newcommand{\si}{^{(\infty)}}
\newcommand{\Sym}{\mathrm{Sym}}

\newcommand{\GL}{\text{GL}}
\newcommand{\Aut}{\text{Aut}}

\begin{document}

\title[Strongly converging unitary representations for extensions by exact groups]{Strongly converging unitary representations for extensions by exact groups}

\author[D. Gao]{David Gao}
\address{Department of Mathematics, UC Berkeley, Evans Hall, CA 94705, USA}\email{d-gao@berkeley.edu}\urladdr{https://sites.google.com/view/david-gao}

\author[S. Kunnawalkam Elayavalli]{Srivatsav Kunnawalkam Elayavalli}
\address{Department of Mathematics, UC Berkeley, Evans Hall, CA 94705, USA}\email{srivatsav.k.e@berkeley.edu}
\urladdr{https://sites.google.com/view/srivatsavke/home}

\author[M. Mj]{Mahan Mj}
\address{\parbox{\linewidth}{School of Mathematics, Tata Institute of Fundamental Research, \\
1, Homi Bhabha Road,  Mumbai-400005, India
}}
\email{mahan@math.tifr.res.in}
\urladdr{https://mathweb.tifr.res.in/~mahan/}

\begin{abstract}
We prove the existence of strongly converging unitary representations in various new settings of countable groups, in particular, arising as extensions by exact groups: semidirect products with amenable groups; generalized wreath products with abelian base; graph wreath products; various free-by-cyclic groups including Gersten's group; and general Bernoulli shift crossed products on $C^*$-algebras. 

\end{abstract}
\maketitle

\section{Introduction}

In the work \cite{HaagerupThorbjornsen2005}, Haagerup and Thorbjornsen strengthened Voiculescu's asymptotic freeness theorem \cite{Voiculescu1991} into the regime of \emph{strong convergence}. In particular, their result implies that the nonabelian free group $\mathbb{F}_2= \langle a, b\rangle$ admits a sequence of unitary representations $\rho_n: G\to \mathcal{U}(\mathbb{M}_n(\mathbb{C}))$ such that for any noncommutative $*$-polynomial $P$ on two variables, $\|\rho_n(P(a,b)) \|\to \|P(a,b)\|$ where the norm on the right hand side is in the left regular representation. Despite originally having $C^*$-algebraic motivations \cite{voiculescu1993around}, strong convergence as a program of research has multiple interdisciplinary applications, including but not limited to von Neumann algebras \cite{HayesPT}, spectral graph theory \cite{BordenaveCollins2019}, optimal spectral gaps \cite{HideMagee2023}, minimal surface theory \cite{song2025randomharmonicmapsspheres}. We refer the reader to the recent surveys \cite{vanhandel2025strongconvergenceshortsurvey, magee2025strong} for more references and applications.

Concerning strong convergence properties for countable groups, we isolate three existential notions that we concern ourselves with in this paper: \begin{enumerate}
    \item $G$ is said to be MF (\emph{matricial field}) \cite{BlackadarKirchberg1997} if there exists a sequence of approximate homomorphisms $\rho_n: G\to \mathcal{U}(\mathbb{M}_n(\mathbb{C}))$ that strongly converge to the left regular representation. 
    \item $G$ is said to be PMF (\emph{purely matricial field}) \cite{Magee_2024} if there exists a sequence of homomorphisms $\rho_n: G\to \mathcal{U}(\mathbb{M}_n(\mathbb{C}))$ that strongly converge to the left regular representation. 
    \item $G$ is said to be PFF (\emph{purely finite field}) \cite{gao2026newsource} if there exists a sequence of homomorphisms $\rho_n: G\to \mathcal{U}(\mathbb{M}_n(\mathbb{C}))$ that strongly converge to the left regular representation, such that $\rho_n(G)$ is finite for each $n\in \mathbb{N}$.
    \end{enumerate}

Recent years have witnessed a substantial study into these properties \cite{BordenaveCollins2019, loudermagee2025limitgroups, MageeThomas2023, Magee_2024, magee2025strong, schafhauser2024finite, RainoneSchafhauser2019, collins2014strong}. In a recent development \cite{gao2026newsource}, a new $C^*$-algebraic approach was introduced and used to prove these properties in vast new families of countable groups. Most notably, \cite{gao2026newsource} proved the PFF property in broad generality: virtually special groups such as fundamental groups of closed hyperbolic 3-manifolds, free products, group doubles, graph products, and more general amalgamated free products. 

In the present article we expand the family of countable groups admitting strongly converging unitary representations in a different direction. We focus here on \emph{extensions} by exact groups. Before we state our results, we first remark that exactness is a natural assumption. Indeed, even in the case of trivial actions, i.e, direct products, preservation of strong convergence is unknown in the absence of this assumption. Our first main result extends the MF phenomena proved in \cite[Theorem 1.4]{RainoneSchafhauser2019}.

\begin{thm}\label{intro thm 1}
    Let $G$ be a finitely generated residually finite amenable group and $L$ be an exact MF/PMF/PFF group. Then $G\rtimes L$ is MF/PMF/PFF.
\end{thm}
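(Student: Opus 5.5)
The plan is to realize $G\rtimes L$ through finite quotients of $G$ that are invariant under the action, and to combine a residual-finiteness argument for $G$ with the given strongly converging representations $\rho_n$ of $L$. \textbf{Step 1} is to make the finite quotients $L$-invariant. Since $G$ is finitely generated, it has only finitely many subgroups of each finite index. Therefore
\[
N_k=\bigcap\{H\le G : [G:H]\le k\}
\]
is a characteristic finite-index subgroup, and $\bigcap_k N_k=\{e\}$ by residual finiteness. Each $N_k$ is $\Aut(G)$-invariant, hence $L$-invariant. This gives quotient maps $G\rtimes L\to (G/N_k)\rtimes L$, where $G/N_k$ is finite.

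\textbf{Step 2} is the passage from the finite-by-$L$ quotients to the limit. I would show that the representations $\lambda_{(G/N_k)\rtimes L}$ converge strongly to $\lambda_{G\rtimes L}$ as $k\to\infty$, in the sense that $\|x\|_{C^*_r((G/N_k)\rtimes L)}\to\|x\|_{C^*_r(G\rtimes L)}$ for every $x\in\C[G\rtimes L]$. Write $C^*_r(G\rtimes L)=C^*_r(G)\rtimes_r L$ and $C^*_r((G/N_k)\rtimes L)=C^*(G/N_k)\rtimes_r L$. Since $G$ is amenable, the finite quotients $G/N_k$ separate points, so the quasi-regular representations $\ell^2(G/N_k)$ are weakly contained in $\lambda_G$. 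Moreover, $\bigoplus_{k\ge K}\ell^2(G/N_k)$ is faithful on $C^*(G)=C^*_r(G)$ for every $K$, because $G$ is a residually finite amenable group and such groups have a residually finite-dimensional group $C^*$-algebra. These representations are $L$-equivariant. Then I would use exactness of $L$: the reduced crossed product by an exact group preserves injectivity of equivariant maps. It follows that $\prod_k C^*(G/N_k)/\bigoplus_k C^*(G/N_k)\,\rtimes_r L$ contains $C^*_r(G)\rtimes_r L$ isometrically, since exactness lets the crossed product commute with the quotient by the direct sum. That is exactly the desired norm convergence, with $\limsup\le$ coming from weak containment and $\liminf\ge$ from faithfulness.

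\textbf{Step 3} reduces to finite-by-$L$ groups. For fixed $k$, put $F=G/N_k$, so that $F\rtimes L$ is a finite extension. The action of $L$ on the finite set $F$ factors through a finite-index normal subgroup $L_0\le L$ that acts trivially, so $F\times L_0$ has finite index in $F\rtimes L$. I would build representations by induction. $F\times L_0$ is MF/PMF/PFF: tensor $\lambda_F$ with the restriction of $\rho_n$ to $L_0$, using the fact that restricting to a finite-index subgroup preserves strong convergence. Inducing back to $F\rtimes L$ then preserves strong convergence, since induction from a finite-index subgroup is given by a fixed matrix amplification. Homomorphisms and finite images are preserved in the PMF/PFF cases.

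\textbf{Step 4} is a diagonal argument. Fix a countable dense family of $*$-polynomials. For each $k$, choose $n_k$ such that the induced representation of $(G/N_k)\rtimes L$ is within $1/k$ of $\lambda_{(G/N_k)\rtimes L}$ on the first $k$ polynomials. Composing with the quotient map and using Step 2 gives strong convergence to $\lambda_{G\rtimes L}$.

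I expect Step 2 to be the main obstacle. Specifically, one must justify that exactness of $L$ makes the map
\[
\Bigl(\prod_k A_k\Bigr)\rtimes_r L\Big/\Bigl(\bigoplus_k A_k\Bigr)\rtimes_r L\;\longrightarrow\;\Bigl(\prod_k A_k/\textstyle\bigoplus_k A_k\Bigr)\rtimes_r L
\]
isometric, and that $C^*_r(G)$ embeds $L$-equivariantly in the corona algebra $\prod_k A_k/\bigoplus_k A_k$. The action of $L$ on $\prod_k A_k$ is only by automorphisms of each factor, which is fine. I would try to invoke the Kirchberg–Wassermann characterization of exactness of $L$ directly.
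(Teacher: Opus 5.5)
Your proposal is correct and has the same architecture as the paper's proof. Both go through characteristic finite quotients $G/N_k$, then strong convergence of $C^*(G/N_k)\rtimes_r L$ to $C^*_r(G)\rtimes_r L$ from amenability of $G$ plus exactness of $L$, then a reduction to finite-by-$L$ groups, and finally a diagonal argument (which the paper leaves implicit). The two key steps, however, are justified differently.

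\emph{Step 2.} The paper proves a general upgrading lemma (Proposition~\ref{prop: strong convergence upgrade}). It lets $L$ act innerly on $B=\prod_\cU (A^{(k)}\rtimes L)$ via $\mathrm{Ad}((\lambda_g)_\cU)$, untwists $B\rtimes_\beta L\cong B\otimes C^*_r(L)$, uses exactness of $C^*_r(L)$ only as a $C^*$-algebra to move $\otimes C^*_r(L)$ inside the ultraproduct, and pulls back along the comultiplication maps. You instead invoke Kirchberg--Wassermann exactness of the functor $A\mapsto A\rtimes_r L$ directly. This is quicker, but it uses a formally stronger hypothesis, which for discrete groups is equivalent to exactness of $C^*_r(L)$ only via the nontrivial theorem that the latter forces amenability of $L\curvearrowright\beta L$. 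Note that injectivity is preserved by $\rtimes_r L$ for every $L$; exactness is needed only to commute $\rtimes_r L$ with the quotient by $\bigoplus_k A_k$. To finish your route you also need the elementary identifications $(\prod_k A_k)\rtimes_r L\subseteq\prod_k(A_k\rtimes_r L)$ and $(\bigoplus_k A_k)\rtimes_r L=\bigoplus_k(A_k\rtimes_r L)$. These convert the norm in $(\prod_k A_k/\bigoplus_k A_k)\rtimes_r L$ into $\limsup_k\|x_k\|$.

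\emph{Step 3.} The paper avoids induction via the embedding $F\rtimes L\hookrightarrow(F\rtimes\Aut(F))\times L$, $g\mapsto(g,e)$, $l\mapsto(\alpha(l),l)$, with $\alpha$ the action. It therefore only needs that a finite group times an MF/PMF/PFF group, and subgroups of such groups, keep the property. Your passage through $F\times L_0$ and induction also works, since induction is a fixed matrix amplification that preserves homomorphisms and finiteness of images; it just has somewhat more to verify.

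Two small corrections are needed.

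First, faithfulness of $\bigoplus_{k\ge K}\ell^2(G/N_k)$ does not follow from $C^*(G)$ being RFD, which only supplies \emph{some} separating family of finite-dimensional representations. The right reason is that the associated positive-definite functions (equivalently, the normalized traces) are $1_{N_k}$, and these converge to $\delta_e$ pointwise because the $N_k$ decrease to $\{e\}$. This is the paper's trace-preservation argument.

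Second, faithfulness on tails only gives $\limsup_k\|x_k\|=\|x\|$. For the full limit, observe that $\|x_k\|$ is nondecreasing in $k$, because the equivariant surjections $G/N_k\to G/N_{k'}$ for $k\ge k'$ induce maps of reduced crossed products. Alternatively, run the argument along arbitrary subsequences, as the paper effectively does by fixing an arbitrary free ultrafilter.
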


A particular family of crossed products that have not been addressed before are wreath products. We are able to give a somewhat definitive result here when the base is abelian. In fact our result address the general setting of generalized wreath products. Soficity for such groups in great generality has been addressed in \cite{gao2025soficity}. Before we state the result, recall that a subgroup $H<G$ is \emph{separable} if $H=\cap_{i\in \mathbb{N}}H_i$ where $H_i<H$ is finite index for each $i\in \mathbb{N}$.  

\begin{thm}\label{intro thm 2}
    Let $G$ be a residually finite abelian group and $L$ be a residually finite exact PMF/PFF group. Then $G\wr L$ is PMF/PFF. More generally, if $G$ is residually finite abelian, $L$ is exact PMF/PFF, and $L\curvearrowright I$ is any transitive action with separable stabilizers, then the generalized wreath product $\oplus_{I}G\rtimes L$ is PMF/PFF. 
\end{thm}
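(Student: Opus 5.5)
The plan is to reduce the generalized wreath product to crossed products of commutative $C^*$-algebras by $L$, and then use exactness of $L$ to carry strong convergence through. Write $A=C^*(\oplus_I G)\cong C(\widehat{G}^I)$. Since $G$ is abelian, $\widehat{G}$ is a compact abelian group, and $C^*_r(\oplus_I G\rtimes L)\cong C(\widehat{G}^I)\rtimes_r L$, where $L$ acts by the Bernoulli shift along $L\curvearrowright I$. First I approximate this by finite quotients. $G$ is residually finite abelian, so choose finite-index subgroups $G_k\downarrow\{e\}$. $L\curvearrowright I=L/H$ has separable stabilizer $H$, so choose finite-index subgroups $H_k\downarrow H$ and set $I_k=L/H_k$. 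Because $I\to I_k$ is $L$-equivariant, $G\wr_I L$ maps onto the finite-base group $\bigl(\oplus_{I}G/G_k\bigr)\rtimes L$. The key observation is that this quotient is still infinite. So I instead pass to the subalgebra generated by functions on $\widehat{G}^I$ that depend only on finitely many coordinates through the characters of $G/G_k$.

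Concretely, fix a finite set $F\subset I$ and a $*$-polynomial $P$ in the generators. I want homomorphisms of $\oplus_I G\rtimes L$ into unitary groups (finite in the PFF case) whose norms on $P$ converge to $\|P\|_{C^*_r}$. The construction composes two maps.

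The first map is an induced-type representation. Send $\oplus_I G\rtimes L$ to $\bigl(\oplus_{I_k} G/G_k\bigr)\rtimes L$, which is (finite abelian)$\rtimes L$. Since $L$ acts on the finite group $\oplus_{I_k}G/G_k$ through the finite quotient $L/N_k$, where $N_k$ is the core of $H_k$, this group is virtually $N_k\times$(finite).

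The second step is to use the PMF/PFF property of $L$, hence of its finite-index subgroup $N_k$. Take strongly convergent $\pi_n\colon L\to\mathcal U(\mathbb M_n)$ and tensor them with the finite regular representation of the finite part. Exactness of $L$ is what makes this work: strong convergence of $\pi_n$ to $\lambda_L$ gives a faithful limit on the crossed product $B\rtimes_r L$ for $B$ finite-dimensional commutative. The tool is the standard fact that for exact $L$, $\prod\mathbb M_n/\bigoplus\mathbb M_n$-valued strong convergence is preserved under $B\otimes\,\cdot$ and under reduced crossed products by finite-dimensional $L$-algebras, as in the argument for Theorem~\ref{intro thm 1}.

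Then I let $k\to\infty$. The reduced norms of $P$ in $C(\widehat{G/G_k}^{\,I_k})\rtimes_r L$ should converge to the norm in $C(\widehat G^I)\rtimes_r L$, and a diagonal argument then finishes the proof. Convergence of these norms is the step I expect to be the main obstacle. The inequality $\le$ is automatic, because each such algebra is a quotient. For $\ge$ I need the quotient maps to be asymptotically isometric on $P$. I would argue this by realizing $C(\widehat G^I)$ as the inductive limit of the subalgebras $C(\widehat{G/G_k}^{\,I_k})$, embedded via the dual maps $\widehat{G/G_k}^{\,I_k}\hookrightarrow\widehat G^I$ induced by the projections $I\to I_k$ and $G\to G/G_k$. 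These embeddings are $L$-equivariant. Their union of images is dense, because the $G_k$ and $H_k$ separate points, and separability of $H$ is exactly what makes the fibres of $I\to I_k$ shrink to points. So the reduced crossed product is the inductive limit of $C(\cdot_k)\rtimes_r L$, with injective connecting maps, since the reduced crossed product preserves injections. The norms of $P$ are then eventually computed inside these subalgebras. This turns the quotient approximation into a subalgebra one.

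Finally, the conditional expectations onto the subalgebras let me realize the subalgebra norms via the finite-quotient representations constructed above. That yields homomorphisms of the whole group, finite-image whenever the $\pi_n$ have finite image, giving PMF/PFF.

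The residually finite hypothesis on $L$ in the ordinary wreath case is just separability of the trivial stabilizer.
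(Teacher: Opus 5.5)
\textbf{There is a genuine gap, at the step you yourself flag as the main obstacle.}

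Your overall architecture is the paper's. You map $\oplus_I G\rtimes L$ onto $(\oplus_{I_k}G/G_k)\rtimes L$ by summing over the fibres of $I\to I_k$; quotienting $G$ by $G_k$ is harmless, whereas the paper keeps $G$. You note that $L$ then acts through the finite group $\Sym(I_k)$, embed into $(\text{finite})\times L$, and let $k\to\infty$. Your observation that the closed $L$-invariant subgroups $Y_k=\widehat{G/G_k}^{\,I_k}\subseteq\widehat G^I$ increase and have dense union is also correct.

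The problem is how you use this for the inequality $\ge$. An injection of spaces $Y_k\hookrightarrow\widehat G^I$ induces by pullback the \emph{restriction} map $C(\widehat G^I)\to C(Y_k)$. That map is a surjection, not an embedding of $C(Y_k)$ into $C(\widehat G^I)$. So there is no increasing family of subalgebras with dense union. For instance, for $G=\Z$ the space $\widehat G^I=\mathbb T^I$ is connected, so $C(\mathbb T^I)$ has no nontrivial projections and contains no copy of the finite-dimensional algebras $C(Y_k)$ at all. Consequently the following claims all rest on reversed arrows:
\begin{itemize}
\item that $C(\widehat G^I)\rtimes_r L$ is an inductive limit of the $C(Y_k)\rtimes_r L$;
\item that the norm of $P$ is ``eventually computed inside these subalgebras'';
\item that conditional expectations onto them realize those norms.
\end{itemize}
``Reduced crossed products preserve injections'' therefore has nothing to act on.

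What density actually gives is asymptotic isometry on the \emph{base}: $f\mapsto(f|_{Y_k})_\cU$ is an $L$-equivariant embedding $C(\widehat G^I)\hookrightarrow\prod_\cU C(Y_k)$. The missing step is to push this through the reduced crossed product by $L$. The paper does this with Proposition~\ref{prop: strong convergence upgrade}, and that is exactly where exactness of $L$ enters:
\begin{enumerate}
\item let $L$ act on $B=\prod_\cU(C(Y_k)\rtimes_r L)$ by $\operatorname{Ad}(\lambda_g)_\cU$;
\item identify $B\rtimes L\cong B\otimes C_r^\ast(L)$;
\item use exactness to commute $\otimes C_r^\ast(L)$ with $\prod_\cU$;
\item pull back along the comultiplications.
\end{enumerate}
Because your maps are honest equivariant $*$-homomorphisms, you could instead argue directly. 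The induced maps $\rho_k$ on reduced crossed products intertwine the canonical conditional expectations. So $(\rho_k(x^\ast x))_\cU=0$ forces $\sup_{Y_k}E(x^\ast x)\to0$ along $\cU$. These sups are nondecreasing in $k$, so $E(x^\ast x)$ vanishes on the dense set $\bigcup_kY_k$, and faithfulness of $E$ gives $x=0$.

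You invoke exactness only in the finite-dimensional step, where it is not needed. There $F\rtimes L\hookrightarrow(F\rtimes\Sym(I_k))\times L$ for the finite base $F$, and tensoring a strongly convergent sequence for $L$ with a fixed finite-dimensional representation preserves strong convergence.
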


Our next two results address  specific families of free-by-cyclic groups.

\begin{thm}\label{intro thm 3}
    $\F_n \rtimes \Z$ is PFF, where $\F_n = \F_k \ast \F_{n - k}$ and $\Z$ acts by multiplying the standard free generators of $\F_{n - k}$ on the left and/or right by powers of conjugates (in $\F_k$) of the standard free generators of $\F_k$.
\end{thm}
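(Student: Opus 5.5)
The plan is to rewrite $\F_n\rtimes\Z$ as a multiple HNN extension of $\F_k\times\Z$ along infinite cyclic subgroups. After that, the PFF property should follow from the amalgamation/HNN machinery of \cite{gao2026newsource}, once we build finite representations of the vertex group in which the two edge subgroups of each stable letter have unitarily equivalent images.

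\emph{Step 1: the HNN presentation.} Write $x_1,\dots,x_k$ for the free generators of $\F_k$, $y_1,\dots,y_{n-k}$ for those of $\F_{n-k}$, and $t$ for the generator of $\Z$. By hypothesis
$$t x_i t^{-1}=x_i,\qquad t y_j t^{-1}=u_j y_j v_j,$$
where each of $u_j,v_j$ is either trivial or of the form $g x_{i}^{m} g^{-1}$ with $g\in\F_k$. The first relation says $A:=\langle x_1,\dots,x_k,t\rangle\cong \F_k\times\Z$. Rearranging the second relation gives
$$y_j^{-1}(u_j^{-1}t)\,y_j = v_j t .$$
Since $t$ commutes with $\F_k$, we have $u_j^{-1}t = g x_i^{-m}t\,g^{-1}$. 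Hence each $y_j$ conjugates the infinite cyclic subgroup generated by a conjugate of $x_i^{a}t$ onto one generated by a conjugate of $x_{i'}^{b}t$. A standard Bass--Serre / Tietze check shows that $\F_n\rtimes\Z$ is exactly the multiple HNN extension of $A$ with stable letters $y_j$ and these cyclic edge groups. Since conjugations inside $A$ can be absorbed into the stable letters, we may assume the edge groups are $\langle x_i^{a}t\rangle$ and $\langle x_{i'}^{b}t\rangle$ (with $a$ or $b$ possibly $0$).

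\emph{Step 2: compatible finite representations of $A$.} Take PFF representations $\pi_n$ of $\F_k$ with finite image; for instance, those produced in \cite{gao2026newsource}, or random permutation representations. Let $N_n$ be a common multiple of the orders of all $\pi_n(x_i)$, with $N_n\to\infty$. Set
$$\rho_n=\pi_n\otimes\lambda_{\Z/N_n},$$
where $t$ acts by the regular representation of $\Z/N_n$ in the second factor. This has finite image, and it converges strongly to the regular representation of $\F_k\times\Z$. Indeed, $\Z$ is amenable (hence exact) and the regular representations of $\Z/N_n$ converge strongly to that of $\Z$; alternatively one can invoke Theorem~\ref{intro thm 1}.

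The key point is a finite Fell absorption. Because $\pi_n(x_i)^{N_n}=1$, the unitary $\pi_n(x_i)^a\otimes\lambda(t)$ is conjugate to $1\otimes\lambda(t)$ by an explicit block-diagonal unitary. Hence all the elements $x_i^{a}t$ have images with identical spectral data: every $N_n$-th root of unity occurs with the same multiplicity. Consequently, for each $j$ there is a unitary $W_j$ conjugating $\rho_n(x_i^at)$ to $\rho_n(x_{i'}^bt)$. Any choice $W_jU$, with $U$ in the commutant of $\rho_n(x_i^a t)$, also works. Sending $y_j\mapsto W_jU_j$ therefore defines an honest homomorphism of the whole group. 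We take each $U_j$ to be a random element of a finite subgroup of that commutant (for example, block permutation matrices), so that the image stays finite.

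\emph{Step 3: strong convergence, the main obstacle.} We must show that these HNN representations converge strongly to the regular representation of $\F_n\rtimes\Z$. I would invoke the HNN/amalgamated free product criterion of \cite{gao2026newsource}. It should require strong convergence on the vertex group $A$ (Step 2), exactness of $A$, and asymptotic freeness of the randomized intertwiners relative to the edge subgroups. The last condition should follow from the randomization over the commutant, in the spirit of random permutation/Haar unitary arguments. What needs care is checking that the hypotheses on the edge groups hold here: for example, that the restrictions of $\rho_n$ to $\langle x_i^at\rangle$ converge strongly, and converge to multiples of the regular representation of $\Z$. If the cited criterion is formulated only for amalgams, I would first convert the HNN extension into an amalgamated free product over $A$ in the usual way, by passing to a suitable finite-index subgroup or a double, and then use that PFF passes to finite-index overgroups via induction.
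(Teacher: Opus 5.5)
Your Steps 1 and 2 are fine. The multiple HNN presentation over $\F_k\times\Z$ is correct, and the finite Fell absorption $\pi_n(x_i)^a\otimes\lambda(t)\cong 1\otimes\lambda(t)$ does give honest finite-image representations. But these only produce \emph{some} finite representations of $\F_n\rtimes\Z$. All the content of the theorem sits in Step 3, and there you prove nothing: you appeal to an unstated criterion from \cite{gao2026newsource} and guess at its hypotheses. What you would actually need is a strong asymptotic freeness statement for intertwiners randomized inside a \emph{finite} subgroup of the commutant. It would have to hold with amalgamation over the edge algebras $\rho_n(\langle x_i^a t\rangle)''\cong C^*(\Z/N_n\Z)$, whose dimension grows with $n$. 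Nothing you cite supplies this, and it is not a standard result.

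A sanity check shows the problem: Steps 1--2 never use that the multipliers are powers of conjugates of generators. Taking $N_n$ a multiple of the exponent of the finite group $\pi_n(\F_k)$, they work verbatim for arbitrary multipliers $w\in\F_k$. So if Step 3 were available off the shelf, you would get PFF in the setting of Theorem~\ref{intro thm 4}, which the paper leaves open (it only gets PMF there). Likewise, for $k=1$ your Step 1 presents Gersten's group as exactly such an HNN extension of $\Z^2$ over cyclic subgroups, and the paper stresses that this group is not covered by existing results. Your fallback does not help either. A finite-index subgroup acts on the same Bass--Serre tree with a finite quotient graph that still contains a cycle, so the induced splitting is again of HNN type, not an amalgam.

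The missing idea is to use the hypothesis to make the automorphism \emph{finite order} after a strongly convergent quotient, rather than splitting the group.
\begin{enumerate}
\item By free exactness, sending $x_i$ to the generator of the $i$-th copy of $\Z/m\Z$ gives $C_r^\ast(\F_n)\hookrightarrow\prod_{m\to\cU}C_r^\ast\bigl((\Z/m\Z)^{\ast k}\ast\F_{n-k}\bigr)$.
\item The automorphism descends to these quotients. Since each multiplier is a power of a conjugate of some $x_i$, its image has order dividing $m$, so $t^m$ acts trivially. This is exactly where the hypothesis enters: a general $w\in\F_k$ can have infinite order in $(\Z/m\Z)^{\ast k}$.
\item The quotient actions converge strongly in the ambient sense, and $\Z$ is exact. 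Proposition~\ref{prop: strong convergence upgrade} therefore embeds $C_r^\ast(\F_n\rtimes\Z)$ into $\prod_{m\to\cU}C_r^\ast\bigl(((\Z/m\Z)^{\ast k}\ast\F_{n-k})\rtimes\Z\bigr)$, and this embedding is induced by group homomorphisms.
\item Because the action has finite order, each of these groups embeds into $\bigl(((\Z/m\Z)^{\ast k}\ast\F_{n-k})\rtimes\Z/m\Z\bigr)\times\Z$. That group is virtually free times $\Z$, hence PFF, and a diagonal argument finishes.
\end{enumerate}
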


Note that the Gersten's group $\F_3 \rtimes \Z$, where $\F_3 = \langle a, b, c \rangle$ and $\Z$ acts by $a \mapsto a$, $b \mapsto ba$, $c \mapsto ca^2$, satisfies the condition of the above Theorem and is thus PFF. Interestingly Gersten's group is known to not be virtually special, and therefore is a geniunely new example not covered by existing results. The above result also accommodates the following example (Corollary \ref{col: one gen of braid}): $\F_3 \rtimes \Z$, where $\F_3 = \langle a, b, c \rangle$ and $\Z$ acts by $a \mapsto a$, $b \mapsto b^{-1}c^{-1}bcb$, $c \mapsto b^{-1}cb$. This is of minor relevance to the pure braid group on four strands. It is known to experts that this group is isomorphic to $\mathbb{Z}\times (\mathbb{F}_3\rtimes \mathbb{F}_2)$, where the action of one of the generators of $\mathbb{F}_2$ corresponds to the example above. We thank Jingyin Huang for explaining this to us. 

 We also remark that the above result applies to  $\F_2 \rtimes \Z$, where $\Z$ acts via any conjugate (in $\GL(2, \Z)$) of either $\begin{bmatrix}
            1 & 1 \\ 0 & 1
        \end{bmatrix}$ or $\begin{bmatrix}
            -1 & 1 \\ 0 & -1
        \end{bmatrix}$. Based on the structural results of \cite{BestvinaFeighnHandel2005Tits2}, it is plausible that all virtually free groups whose $\mathbb{Z}$-actions induce polynomially growing automorphisms are PFF. We are unable to settle this at present, but are able to prove the following result towards this.

\begin{thm}\label{intro thm 4}
    $\F_n \rtimes \Z$ is PMF, where $\F_n = \F_k \ast \F_{n - k}$ and $\Z$ acts by multiplying the standard free generators of $\F_{n - k}$ on the left and/or right by elements of $\F_k$.
\end{thm}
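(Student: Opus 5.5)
The plan is to rewrite the group as a multiple HNN extension of $\F_k\times\Z$ over infinite cyclic subgroups, and then combine strong convergence for the vertex group with the amalgamation/HNN machinery of \cite{gao2026newsource}. Write $\F_k=\langle a_1,\dots,a_k\rangle$, $\F_{n-k}=\langle x_1,\dots,x_{n-k}\rangle$, and let $t$ generate $\Z$. By hypothesis $t$ acts trivially on $\F_k$ and sends $x_i\mapsto u_ix_iv_i$ with $u_i,v_i\in\F_k$. The defining relation $tx_it^{-1}=u_ix_iv_i$ can be rewritten as
\[
x_i^{-1}\,(u_i^{-1}t)\,x_i \;=\; v_i t .
\]
Hence $G=\F_n\rtimes\Z$ is the multiple HNN extension of $H=\F_k\times\langle t\rangle$ with stable letters $x_i$. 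Each $x_i$ conjugates the cyclic subgroup $\langle u_i^{-1}t\rangle$ onto $\langle v_it\rangle$. Both subgroups are infinite cyclic, since they project isomorphically onto $\langle t\rangle$. We can check this presentation directly, or via Bass--Serre theory: the normal form of $\F_n\rtimes\Z$ matches the HNN normal form.

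The first step is to show that $H$ is PMF. Indeed $\F_k$ is exact and PFF, and $\Z$ is finitely generated, residually finite and amenable. So Theorem~\ref{intro thm 1} with trivial action gives strongly converging representations $\pi_n$ of $H$. Concretely, I would take $\pi_n=\rho_n\otimes\sigma_n$, where $\rho_n$ is a strongly converging sequence for $\F_k$ and $\sigma_n(t)$ is the diagonal unitary of $N_n$-th roots of unity with $N_n\to\infty$. Exactness of $\F_k$ is what makes the tensor product converge strongly.

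The second step is to extend $\pi_n$ to $G$ by choosing unitaries $X_i^{(n)}$ with
\[
X_i^{(n)*}\,\pi_n(u_i^{-1}t)\,X_i^{(n)}=\pi_n(v_it).
\]
This requires $\pi_n(u_i^{-1}t)$ and $\pi_n(v_it)$ to be unitarily equivalent. I expect to arrange this by tensoring with $\sigma_n$ and averaging: for $w\in\F_k$, the operator $\rho_n(w)\otimes\sigma_n(t)$ is unitarily equivalent to $\rho_n(w)\otimes\sigma_n(t)\cdot\zeta$ for every $N_n$-th root of unity $\zeta$. So its spectrum with multiplicities is invariant under rotation by $N_n$-th roots of unity. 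If the multiplicities are also made uniform, for instance by replacing $\rho_n$ by $\rho_n\otimes\lambda_{\Z/N_n}$-type padding, then both operators have the uniform spectral distribution on the $N_n$-th roots of unity with equal multiplicities. They are then unitarily conjugate, and $X_i^{(n)}=W_i^{(n)}V_i^{(n)}$ works for any fixed intertwiner $W_i^{(n)}$ and any unitary $V_i^{(n)}$ in the commutant of $\pi_n(v_it)$. This gives genuine homomorphisms of $G$, which is why we obtain PMF.

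The main obstacle is the third step: showing that these representations of $G$ converge strongly to $\lambda_G$. I would follow the strategy of \cite{gao2026newsource} for amalgamated free products and HNN extensions. Take $V_i^{(n)}$ to be independent Haar-random unitaries in the commutant of $\pi_n(v_it)$, a product of unitary groups indexed by the eigenspaces. Then prove a conditional, cyclic-amalgamated analogue of strong asymptotic freeness. The target is the reduced HNN $C^*$-algebra $C^*_r(H)\rtimes$, which is exact because $\F_k$ is exact. The needed inputs are:
\begin{itemize}
\item strong convergence of $\pi_n$ on $H$;
\item convergence of the spectral measures of the edge generators to Haar measure on the circle, matching the regular representation of $\Z$;
\item exactness, to pass from norm estimates on the edge algebra $C(\mathbb{T})$ to the whole algebra.
\end{itemize}
If a direct random-matrix argument is too hard, my first fallback is to realise the HNN extension as an amalgamated free product $(H\ast\F_{n-k})$ over free products of copies of $\Z$, and to invoke the amalgamated-product result from \cite{gao2026newsource} verbatim.
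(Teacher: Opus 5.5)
Your Steps 0--2 are sound. The relation $x_i^{-1}(u_i^{-1}t)x_i=v_it$ does exhibit $\F_n\rtimes\Z$ as a multiple HNN extension of $H=\F_k\times\Z$; for Gersten's group this is a tubular group over $\Z^2$. Take $\rho_n$ of finite range and $N_n$ a multiple of the exponent of $\rho_n(\F_k)$. Then each $\rho_n(w)\otimes\sigma_n(t)$, $w\in\F_k$, has every $N_n$-th root of unity as an eigenvalue with multiplicity $\dim\rho_n$. So the intertwiners exist without any padding, and you get genuine homomorphisms of $G$.

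The genuine gap is Step 3, which is the entire content of the theorem, and there you give no argument. You need a strong asymptotic freeness statement with amalgamation. Concretely: Haar unitaries in the commutant of $\pi_n(v_it)$, a product of $N_n$ copies of $U(\dim\rho_n)$, must be strongly free from $\pi_n(H)$ over the growing abelian algebras $C^*(\pi_n(u_i^{-1}t))\cong\C^{N_n}$, in the limit yielding the reduced HNN algebra over $C(\mathbb{T})$. This is not an available result. Your three ``needed inputs'' (strong convergence on $H$, weak convergence of edge spectral measures, exactness) are not known to imply it.

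The fallback does not close the gap either. You do not specify the amalgam decomposition, and you do not check the hypotheses of the amalgamation theorem in \cite{gao2026newsource}. Moreover, the paper notes that Gersten's group, which lies in this family, is not covered by existing results, so a verbatim citation cannot be expected to work.

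The missing idea is that amalgamated strong freeness is never needed if you keep $G$ as the crossed product $\F_n\rtimes\Z$ with $\Z$ exact. Take a finite-range strongly convergent sequence $\rho_m$ for $\F_k$. By free exactness, $C^*_r(\F_n)\hookrightarrow\prod_\cU \M_m(\C)\ast C^*_r(\F_{n-k})$. The automorphism descends to these models by the substitution $x_i\mapsto\rho_m(u_i)x_i\rho_m(v_i)$, so ambient strong convergence is automatic. Because $\rho_m$ has finite range, this automorphism has some finite order $r(m)$. Proposition~\ref{prop: strong convergence upgrade} then upgrades the embedding to the crossed products. Finally, $A_m\rtimes\Z\hookrightarrow(A_m\rtimes\Z/r(m)\Z)\otimes C^*_r(\Z)$ reduces everything to crossed products by finite groups. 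Your finite-range trick is the right ingredient, but it should be used to make the $\Z$-action on the models periodic, not to build intertwiners for an HNN model whose strong convergence you cannot control.
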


The upcoming two results are only concerning the MF property. Firstly, we prove MF for general Bernoulli shift crossed products of exact MF groups on MF $C^*$-algebras, with an additional technical assumption (either exactness or an assumption that the infinite tensor product is MF). Notably, this result is able to move significantly beyond amenability requirements on the base algebra (\cite{RainoneSchafhauser2019}). However, this comes at the cost of not being able to prove PMF/PFF in this setting. 

\begin{thm}\label{intro thm 5}
    Let $(A, \varphi)$ be an MF unital exact $C^*$-probability space (or more generally a unital $C^*$-probability space $(A, \varphi)$ s.t. $(A^{\otimes \infty}, \varphi^{\otimes \infty})$ is MF), $L$ be an exact MF group, and $L\curvearrowright I$ be any transitive action with separable stabilizers. Then the generalized Bernoulli shift crossed product $(\otimes_I A)\rtimes L$ is MF. The same holds for the generalized free Bernoulli shift crossed product $(\ast_I A)\rtimes L$, in which case we do not need to assume $A$ is exact.
\end{thm}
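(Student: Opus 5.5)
The plan is to realize $(\otimes_I A)\rtimes L$ (respectively $(\ast_I A)\rtimes L$) as a limit of crossed products by finite quotients of $L$. We then transfer MF along the exact group $L$, following the $C^*$-algebraic strategy of \cite{gao2026newsource}. Write $I = L/H$ with $H$ separable, say $H=\bigcap_i H_i$ with $H_i<L$ of finite index and decreasing. Put $I_i = L/H_i$, a finite $L$-set, with the quotient maps $I\to I_i$. First we will reduce to the reduced crossed product. For the state-preserving Bernoulli action, the canonical conditional expectation together with exactness of $L$ identifies the full and reduced objects in the relevant sense, so it suffices to produce MF approximations of $C := (\otimes_I A)\rtimes_r L$ that are faithful and norm-preserving in the limit.

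Step one is the finite models. For each $i$ we form the finite tensor product $B_i = A^{\otimes I_i}$, or $A^{\ast I_i}$ with respect to $\varphi$, carrying the permutation action of $L$ through $L\to L/\mathrm{core}(H_i)$. The key point is a diagonal-type embedding $\otimes_I A \to \prod_i B_i/\bigoplus_i B_i$, or into an ultraproduct, that is $L$-equivariant and state-preserving. It sends an elementary tensor supported on a finite set $F\subset I$ to the corresponding tensor on the image of $F$ in $I_i$, and this is well defined once $F\to I_i$ is injective, which holds for $i$ large by separability. State preservation follows from the independence (tensor or free) structure. Since each $B_i$ is MF (finite tensor powers of an MF exact algebra are MF; finite free products of MF algebras with respect to faithful states are MF; in the general case we use the hypothesis on $A^{\otimes\infty}$), we get MF approximations of each $B_i\rtimes_r L$. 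For these we use that $L$ is exact MF and that crossed products of MF algebras by MF exact groups acting through finite quotients are MF: conjugate by the finite permutation action and tensor with the MF model of $L$, as in the proof of Theorem \ref{intro thm 1}.

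Step two is the main obstacle: showing that the norms converge, namely
\[
\lim_{i} \|x_i\|_{B_i\rtimes_r L} = \|x\|_{C}
\]
for $x$ in the algebraic crossed product with lifts $x_i$. The inequality $\ge$ (lower semicontinuity) comes from the state-preserving equivariant embedding and the conditional expectation onto the coefficient algebra. The inequality $\le$ is where exactness of $L$ is needed. The plan is to use that for exact $L$ the reduced crossed product functor preserves the exact sequence
\[
0\to \textstyle\bigoplus_i B_i \to \prod_i B_i \to \prod_i B_i/\bigoplus_i B_i \to 0,
\]
so that $(\prod B_i)\rtimes_r L$ modulo $(\bigoplus B_i)\rtimes_r L$ equals $(\prod B_i/\bigoplus B_i)\rtimes_r L$. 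This last algebra contains $C$ isometrically, since reduced crossed products respect injective equivariant inclusions. Its norm is also the limsup of the norms in $B_i\rtimes_r L$. Combined with the MF approximations of each $B_i\rtimes_r L$ and a diagonal argument, this gives MF for $C$. In the free case no exactness of $A$ is needed, because the MF property of finite reduced free products holds without it, and only exactness of $L$ enters step two.
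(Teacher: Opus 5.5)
Your proof is correct. Most of it follows the paper: the finite models $B_i$ over $L/H_i$, the equivariant embedding obtained by pushing tensors supported on a finite $F\subset I$ forward along $I\to L/H_i$ once this is injective on $F$, and the final embedding $B_i\rtimes L\hookrightarrow (B_i\rtimes\Sym(L/H_i))\otimes C_r^*(L)$. (The paper builds the same embedding using the expectations $E_k$ followed by $\iota_k$.)

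The genuine difference is the step from the coefficient algebra to the crossed product. The paper uses Proposition~\ref{prop: strong convergence upgrade}:
\begin{itemize}
\item $L$ acts innerly on $B=\prod_\cU(B_k\rtimes L)$ by $\mathrm{Ad}((\lambda_g)_\cU)$, so $B\rtimes_\beta L\cong B\otimes C_r^*(L)$;
\item exactness of $C_r^*(L)$ moves the tensor factor inside the ultraproduct;
\item the comultiplication is then inverted.
\end{itemize}
You instead apply exactness of the functor $-\rtimes_r L$ to $0\to\bigoplus B_i\to\prod B_i\to\prod B_i/\bigoplus B_i\to 0$. This works, but you should say why $(\prod B_i/\bigoplus B_i)\rtimes_r L\to\prod(B_i\rtimes_r L)/\bigoplus(B_i\rtimes_r L)$ is an isometric embedding:
\begin{itemize}
\item It is well defined because, by exactness, the kernel of $(\prod B_i)\rtimes_r L\to(\prod B_i/\bigoplus B_i)\rtimes_r L$ is exactly $(\bigoplus B_i)\rtimes_r L$.
\item It is injective because an element whose coordinates tend to $0$ has all its Fourier coefficients in $\bigoplus B_i$. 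It therefore vanishes in $(\prod B_i/\bigoplus B_i)\rtimes_r L$, since the canonical conditional expectation there is faithful.
\end{itemize}
This already gives the embedding of $C$ into a $\prod/\bigoplus$ of MF algebras that MF requires, so your separate state-based lower-semicontinuity argument is unnecessary.

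Your route is the classical one and shows clearly where exactness enters. Its cost is relying on the fact that exactness of $C_r^*(L)$ implies exactness of the functor $-\rtimes_r L$. The paper's trick uses exactness only in the form that $\otimes_{\min}C_r^*(L)$ commutes with ultraproducts, together with Fell absorption. It is also packaged as a lemma reused across all the main theorems.

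Two corrections:
\begin{itemize}
\item Your opening reduction is false. Exactness of $L$ does not identify full and reduced crossed products: with $A=\C$ and $L=\F_2$ you get $C^*(\F_2)\neq C_r^*(\F_2)$. The reduction is also unnecessary, since the theorem concerns the reduced crossed product, so delete it.
\item In the free case, MF of the finite reduced free products $\ast_{L/H_i}A$ is not automatic. It is exactly the input the paper draws from free exactness, and you should cite it as such rather than assert it.
\end{itemize}
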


Our next MF result concerns graph wreath products. This is a family of groups that simultaneously generalize both graph products and wreath products. More precisely, let $\Gamma$ be a possibly infinite simplicial graph, and let $G \curvearrowright \Gamma$ be a graph action of a countable group $G$. Connes embeddability and soficity of such groups in great generality was proved in \cite{gao2024soficactionsgraphs}. In order to state our next main result, we need to define the following notion. An action of a countable group $G$ on a simplicial graph $\Gamma$ is said to be \emph{residually finite} if it is sofic in the sense of \cite[Definition 1.1]{gao2024soficactionsgraphs}, where $\epsilon = 0$ and the map $\varphi$ is required to be an actual group homomorphism (see Definition \ref{defn: res fin action} for details). Several natural examples of these actions exist (see Example \ref{exs: res fin action}).

\begin{thm}\label{intro thm 6}
    Let $G$ be a residually finite exact MF group, $H$ be an exact MF group, $\Gamma = (V, E)$ be a (possibly infinite) simplicial graph, and $H\curvearrowright\Gamma$ be a residually finite action. Then the graph wreath product $\star_{\Gamma} G\rtimes H$ is MF.
\end{thm}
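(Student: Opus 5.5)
We use the crossed-product description $C^*_r(\star_\Gamma G\rtimes H)\cong C^*_r(\star_\Gamma G)\rtimes_r H$ and aim to show this reduced crossed product is MF. The plan is to approximate it by crossed products over finite quotient graphs, and then pass matricial models through the crossed product using exactness of $H$.

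\textbf{Finite quotient graphs.} Residual finiteness of $H\curvearrowright\Gamma$ (Definition \ref{defn: res fin action}) gives, for each finite set $F\subset H$ and finite subgraph $\Gamma_0\subset\Gamma$, a finite graph $\Gamma_n$ and an action $H\curvearrowright\Gamma_n$ through a finite quotient. It also gives an equivariant-on-$F$ map $\Gamma_0\to\Gamma_n$ that is injective and preserves adjacency and non-adjacency. For each such $n$ we form the graph product $\star_{\Gamma_n}G$ with the permutation action of $H$.

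\textbf{Models for the finite graph products.} Since $\Gamma_n$ is finite and $G$ is residually finite and MF, the graph product $\star_{\Gamma_n}G$ should be MF; the graph-product results of \cite{gao2026newsource} give this. In fact we would like to exploit that $H$ acts through a finite group $Q_n$ permuting vertices. So the stronger statement we want is that $C^*_r(\star_{\Gamma_n}G)\rtimes_r Q_n$ is MF, and more precisely that there are $Q_n$-equivariant matricial models. We would attempt this by taking MF models $\pi_k$ of $G$ and assigning the same model to every vertex. The graph product of these models is then realized via the strongly convergent construction of \cite{gao2026newsource} with independent random unitary conjugations chosen vertex-symmetrically. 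This symmetric choice makes the vertex permutation act by an honest finite-group representation on the model space. We then combine this with an MF (or PMF) approximation of $H$ itself. Using exactness of $H$ (exactness makes $\otimes_{\min}$ with $C^*_r(H)$ continuous), we tensor and average over $Q_n$ to get models of $C^*_r(\star_{\Gamma_n}G)\rtimes_r H$.

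\textbf{Passing to the limit.} Finally we show that the norms of $*$-polynomials in $C^*_r(\star_\Gamma G)\rtimes_r H$, supported on $F$ and on group elements from vertices of $\Gamma_0$, agree with the corresponding norms in $C^*_r(\star_{\Gamma_n}G)\rtimes_r H$. The graph product over the full subgraph spanned by $\Gamma_0$ embeds as a conditional-expectation-compatible subalgebra of both graph products. Moreover, the local structure of the action on $F$ matches, so the relevant reduced norms coincide asymptotically. Here we would use exactness of $H$ again, together with a Fell-absorption-type argument. A diagonal argument then yields MF models of the full crossed product.

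\textbf{Main obstacle.} The hardest step is the equivariance in the second step. We need matricial models of the finite graph product on which the finite group $Q_n$ permuting vertices acts compatibly while strong convergence is retained. We would first try making the random model vertex-symmetric, as described there, and verify strong convergence via the permutation-invariant version of the \cite{gao2026newsource} argument.
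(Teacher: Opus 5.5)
\textbf{Overall.} Your architecture is close to the paper's: finite graph approximations, MF of finite graph products, exactness of $H$, and Fell absorption. But the step you single out as the main obstacle is not needed, while the step you treat as routine is where the proof actually lives.

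\textbf{Step 2 is easier than you think.} No $Q_n$-equivariant or vertex-symmetric matricial models are required. If a finite group $Q$ acts on an MF algebra $B$, then $B\rtimes Q\subset \M_{|Q|}(B)$ is MF. If $H$ acts through $q\colon H\to Q$, then $b\mapsto b\otimes 1$, $\lambda_h\mapsto\lambda_{q(h)}\otimes\lambda_h$ embeds $B\rtimes_r H$ into $(B\rtimes Q)\otimes C_r^\ast(H)$. This is MF since $C_r^\ast(H)$ is exact and MF.

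\textbf{Step 1 needs a small construction.} Definition~\ref{defn: res fin action} does not hand you an $H$-action on a finite graph. It gives only $\varphi_n\colon H\to\Sym(A_n)$ and induced embeddings $\pi_{n,a}\colon W_n\hookrightarrow\Theta_n$, $a\in A_n$. Your $\Gamma_n$ can be built from these: take vertex set $V(\Theta_n)^{A_n}$ with coordinatewise adjacency, let $H$ permute coordinates via $\varphi_n$, and send $v\mapsto(\pi_{n,a}(v))_{a\in A_n}$. The paper does something simpler. It maps $C_r^\ast(\star_{W_n}G)$ diagonally into $\M_{|A_n|}(C_r^\ast(\star_{\Theta_n}G))$ via the $\pi_{n,a}$, and lets $H$ act by conjugation with the permutation matrices of $\varphi_n$. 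The equivariance condition in the definition is exactly what makes this compatible with the action on $\star_\Gamma G$ over $F_n$ and $W_n$. So the model action is inner, and nothing acts on $\Theta_n$ at all.

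\textbf{The genuine gap is Step 3.} Agreement of the action on $F$ and $\Gamma_0$ does not control reduced crossed-product norms. In the regular representation, $\sum_{h\in F}x_h\lambda_h$ involves all translates $\alpha_k(x_h)$, $k\in H$. These leave $\Gamma_0$ and see the global orbit structure: finite orbits in $\Gamma_n$ versus possibly infinite ones in $\Gamma$. So the norms do not simply agree. Their asymptotic agreement is precisely the nontrivial statement, and it is where exactness of $H$ must enter.

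What local matching plus the isometric inclusions $C_r^\ast(\star_{\Gamma_0}G)\subset C_r^\ast(\star_{\Gamma_n}G)$ gives you is only ambient strong convergence of the actions. The missing idea is the upgrade in Proposition~\ref{prop: strong convergence upgrade}:
\begin{enumerate}
\item Regard the embedding $\pi$ of $C_r^\ast(\star_\Gamma G)$ as landing in $B=\prod_{\cU}\bigl(C_r^\ast(\star_{\Gamma_n}G)\rtimes_r H\bigr)$. Ambient convergence makes $\pi$ equivariant for the inner action $\operatorname{Ad}((\lambda_h)_{\cU})$.
\item Extend $\pi$ to $C_r^\ast(\star_\Gamma G)\rtimes_r H\hookrightarrow B\rtimes_{\operatorname{Ad}}H\cong B\otimes C_r^\ast(H)$.
\item Use exactness of $H$ to map $B\otimes C_r^\ast(H)$ injectively into $\prod_{\cU}\bigl[(C_r^\ast(\star_{\Gamma_n}G)\rtimes_r H)\otimes C_r^\ast(H)\bigr]$.
\item The image lies in the ultraproduct of the images of the coactions $a\mapsto a\otimes 1$, $\lambda_h\mapsto\lambda_h\otimes\lambda_h$. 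Invert these.
\end{enumerate}
Your phrase about exactness and a Fell-absorption-type argument names the right tools. Without this mechanism, however, the key isometry is asserted rather than proved.
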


Now we describe some insights concerning the proofs of our results. The recent works \cite{gao2026toeplitzexactnessstrongconvergence, gao2026newsource}, emphasized a general strategy of proving the existence of strongly converging unitary representations involving two stages. The first stage is to set up \emph{ambient strong convergence}. The second stage involves an \emph{upgrading procedure} to obtain the full strong convergence of matrix models. Ambient strong convergence is typically set up for one of the building blocks of the groups in question using existing strongly convergent models. The precise role of ambient strong convergence is to be able to locate the entire reduced $C^*$-algebra of the building block (not just the group ring) inside a large norm--ultraproduct $C^*$-algebra via a sequence of desirable maps. This then allows one to use $C^*$-algebraic techniques to design upgrading procedures, to extend the embedding to the entire reduced $C^*$-algebra of the group in question. In \cite{gao2026newsource}, the upgrading procedure was carried out using the remarkable work of Pimsner \cite{pimsner1997class}. More generally, in \cite{gao2026toeplitzexactnessstrongconvergence}, a general machine called ``Toeplitz exactness" was proved, to upgrade strong convergence into Toeplitz--Pimnser $C^*$-algebras associated to arbitrary $C^*$-correspondences via the gauge--invariant uniqueness theorem. In the present paper, the upgrading is achieved via a rather direct and succinct argument inspired by a key idea in \cite{RainoneSchafhauser2019}, circumventing the above deep machinery. We are able to carry out a rather elementary ``double crossed product'' trick (Proposition \ref{prop: strong convergence upgrade}) combined with Fell's absorption and exactness to carry out upgrading for extensions. Proving the necessary ambient strong convergence is the rest of the puzzle, wherein we have to design such models in each of the cases addressed in our results, often using the specific algebraic and combinatorial information. 

We take this opportunity to formulate two natural open questions. Firstly, it is natural to now ask if all free-by-cyclic groups are PFF. More generally, what about free-by-free groups? A specific instance of this is whether the pure braid group on four strands is PFF.

\subsection*{Acknowledgements} We are indebted to the UMD Department of Mathematics and the Brin Mathematics Research Center, which supported the joint visit of the first and third authors to UMD in April 2026, wherein this project was completed. The second author is grateful to R. van Handel for several insightful conversations and constant encouragement. 

\subsection*{AI statement} No AI tools were used in the research process and preparation of this manuscript. 

\section{Proof of main results}

\subsection{Upgrading strong convergence for extensions}
\begin{defn}[Ambient strong convergence]
    Fix an index set $I$ and a discrete group $G$. Let $A\sk$ be a $C^\ast$-algebra, $\alpha\sk: G \curvearrowright A\sk$ be an action, and $(X_i\sk)_{i \in I}$ be a tuple in $A\sk$, for each $k \in \N \cup \{\infty\}$. Assume further that $A\si$ is generated by $(X_i\si)_{i \in I}$. We say $(X_i\sk; \alpha\sk)_{i \in I}$ \emph{converges strongly} to $(X_i\si; \alpha\si)_{i \in I}$ if,
    \begin{enumerate}
        \item $(X_i\sk)_{i \in I} \to (X_i\si)_{i \in I}$ strongly in the classical sense;
        \item for any $\ast$-polynomials $P, Q$ and $g \in G$,
        \begin{equation*}
            \hspace{0.8 cm}\lim_{k \to \infty} \|\alpha_g\sk(P(X_i\sk)) - Q(X_i\sk)\| = \|\alpha_g\si(P(X_i\si)) - Q(X_i\si)\|.
        \end{equation*}
    \end{enumerate}
\end{defn}

\begin{rmk}
    We leave it to the reader to check that the definition above is equivalent to the following: For each free ultrafilter $\cU$ on $\N$, there exists an embedding $\pi: A\si \to \prod_\cU A\sk$ such that
    \begin{enumerate}
        \item $\pi(X_i\si) = (X_i\sk)_\cU$ for all $i \in I$; and,
        \item for $a \in A\si$, $g \in G$, if $\pi(a) = (a\sk)_\cU$, then $\pi(\alpha_g\si(a)) = (\alpha_g\sk(a\sk))_\cU$.
    \end{enumerate}
\end{rmk}

\begin{prop}\label{prop: strong convergence upgrade}
    Fix an index set $I$ and let $G$ be an exact group. Let $A\sk$ be a $C^\ast$-algebra, $\alpha\sk: G \curvearrowright A\sk$ be an action, and $(X_i\sk)_{i \in I}$ be a tuple in $A\sk$, for each $k \in \N \cup \{\infty\}$. Assume further that $A\si$ is generated by $(X_i\si)_{i \in I}$. If $(X_i\sk; \alpha\sk)_{i \in I}$ converges to $(X_i\si; \alpha\si)_{i \in I}$ strongly, then $(X_i\sk, \lambda_g)_{i \in I, g \in G}$ in $A\sk \rtimes_{\alpha\sk} G$ converges strongly to $(X_i\si, \lambda_g)_{i \in I, g \in G}$ in $A\si \rtimes_{\alpha\si} G$.
\end{prop}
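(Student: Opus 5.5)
We work throughout with the ultraproduct reformulation in the Remark. Fix a free ultrafilter $\cU$ and let $\pi: A\si \to \prod_\cU A\sk$ be the embedding it provides. Let $B = \prod_\cU A\sk$. The maps $\alpha_g\sk$ define coordinatewise an action $\beta$ of $G$ on $B$. Property (2) of the Remark says exactly that $\pi$ is $G$-equivariant: $\pi\circ\alpha\si_g = \beta_g\circ\pi$.

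It then suffices to show that the map $X_i\si \mapsto (X_i\sk)_\cU$, $\lambda_g \mapsto (\lambda_g)_\cU$ extends to an isometric $\ast$-homomorphism
\[
A\si \rtimes_{\alpha\si} G \to \prod_\cU (A\sk \rtimes_{\alpha\sk} G).
\]
Indeed, this gives $\lim_\cU \|P(X\sk,\lambda)\| = \|P(X\si,\lambda)\|$ for every $\ast$-polynomial $P$ and every $\cU$, which is strong convergence.

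We factor the desired map as
\[
A\si\rtimes_r G \xrightarrow{\ \pi\rtimes G\ } B\rtimes_{r,\beta} G \xrightarrow{\ \Phi\ } \prod_\cU (A\sk\rtimes_r G).
\]
\begin{enumerate}
\item The first map is injective: an equivariant injective $\ast$-homomorphism induces an injective map of reduced crossed products. To see this, represent both on $\ell^2(G,H)$ and use the faithful conditional expectation onto the coefficient algebra, which is compatible with $\pi$.
\item The second map $\Phi$ should send $b\lambda_g$ to $(b\sk\lambda_g)_\cU$. The main obstacle is showing that $\Phi$ is well defined and isometric on the reduced crossed product.
\end{enumerate}

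For contractivity of $\Phi$, we use Fell's absorption trick. Define $\prod_k(A\sk\rtimes_r G) \to \prod_k (A\sk\rtimes_r G)\otimes_{\min} C^*_r(G)$ by $x\mapsto x\otimes 1$, and consider the elements $(a\sk)_k\lambda_g\otimes\lambda_g$. On the algebraic crossed product $\prod_k A\sk \rtimes_{\mathrm{alg}} G$, the map to $\prod_k(A\sk\rtimes_r G)$ is dominated by the reduced norm of $\big(\prod_k A\sk\big)\rtimes_r G$. This holds because, on each coordinate, the reduced crossed product norm is computed on $\ell^2(G,H_k)$, and taking a direct sum of these representations realizes a faithful representation of $\prod_k A\sk$ inducing the reduced norm. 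In fact $\big(\prod_k A\sk\big)\rtimes_r G \to \prod_k (A\sk\rtimes_r G)$ is an injective, hence isometric, $\ast$-homomorphism, because the conditional expectations match coordinatewise.

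We then pass to the quotient by $c_\cU = \{(a\sk):\lim_\cU\|a\sk\|=0\}$. Exactness of $G$ gives
\[
\big(\textstyle\prod_k A\sk\big)\rtimes_r G \,/\, (c_\cU\rtimes_r G)\cong B\rtimes_r G,
\]
since the reduced crossed product functor preserves short exact sequences for exact $G$ (Kirchberg--Wassermann). The image of $c_\cU\rtimes_r G$ in $\prod_k(A\sk\rtimes_rG)$ lies in the ideal of $\cU$-null sequences. Hence $\Phi$ is well defined and contractive.

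For isometry of $\Phi$, let $E_k$ be the canonical conditional expectations $A\sk\rtimes_r G \to A\sk$. These induce a conditional expectation $E$ on the ultraproduct, and $E\circ\Phi$ is the canonical faithful expectation of $B\rtimes_r G$ onto $B$. A $\ast$-homomorphism intertwining a faithful expectation is injective: if $\Phi(x)=0$, then $E_B(x^*x)=E(\Phi(x^*x))=0$, so $x=0$.

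Composing the two steps, $\Phi\circ(\pi\rtimes G)$ is injective, hence isometric, and it sends generators to generators. This proves the claim. We expect the exactness step, identifying the quotient crossed product, to be the only genuinely nontrivial input.
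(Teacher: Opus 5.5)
Your proof is correct. It reaches the conclusion by a genuinely different route from the paper, and the difference lies in how exactness is used.

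\textbf{The paper's route.} It never forms a crossed product of $\prod_{\mathcal U}A^{(k)}$.
\begin{itemize}
\item It regards $\pi$ as landing in $B=\prod_{\mathcal U}(A^{(k)}\rtimes G)$, where $G$ acts by the \emph{inner} action $\operatorname{Ad}((\lambda_g)_{\mathcal U})$.
\item Fell absorption gives $B\rtimes_\beta G\cong B\otimes C_r^*(G)$.
\item Exactness of $C_r^*(G)$ is used in Kirchberg's tensor/ultraproduct form: tensoring with $C_r^*(G)$ passes injectively inside the ultraproduct.
\item It then pulls back along the comultiplications $\lambda_g\mapsto\lambda_g\otimes\lambda_g$.
\end{itemize}

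\textbf{Your route.} You use the coordinatewise action on $\prod_{\mathcal U}A^{(k)}$ and prove directly the crossed-product analogue of Kirchberg's lemma: the canonical map $(\prod_{\mathcal U}A^{(k)})\rtimes_r G\to\prod_{\mathcal U}(A^{(k)}\rtimes_r G)$ is well defined and injective.
\begin{itemize}
\item Well-definedness comes from two facts: the isometric inclusion $(\prod_kA^{(k)})\rtimes_rG\subset\prod_k(A^{(k)}\rtimes_rG)$, and exactness of $-\rtimes_r G$ applied to $0\to c_{\mathcal U}\to\prod_kA^{(k)}\to\prod_{\mathcal U}A^{(k)}\to 0$.
\item Injectivity comes from $E\circ\Phi=E_B$ together with faithfulness of $E_B$. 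You rightly rely only on faithfulness of the canonical expectation on $B\rtimes_r G$. The ultraproduct map $E=\prod_{\mathcal U}E_k$ need not be faithful: take $A^{(k)}=\mathbb{C}$, $G=\mathbb{Z}$, and positive norm-one functions on the circle with shrinking supports.
\end{itemize}

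\textbf{What each buys.} Your argument avoids the auxiliary inner action and the comultiplication. It also makes transparent that exactness is needed only for well-definedness, not for injectivity. The cost is that you invoke exactness in the Kirchberg--Wassermann form (exactness of the reduced crossed product functor). For discrete groups this is equivalent to exactness of $C_r^*(G)$, which is the form the paper uses, but the equivalence is itself a theorem.

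\textbf{A minor point.} Your sentence about $x\mapsto x\otimes 1$ and the elements $(a^{(k)})_k\lambda_g\otimes\lambda_g$ (``Fell's absorption trick'') plays no role in your argument and can be deleted. Your contractivity proof runs entirely through the direct-sum regular representation and exactness.
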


\begin{proof}
    Fix a free ultrafilter $\cU$ on $\N$. Let $\pi: A\si \to \prod_\cU A\sk$ be the embedding defined by $\pi(X_i\si) = (X_i\sk)_\cU$ for all $i \in I$. It suffices to show that $\pi$ extends to an embedding $\tilde\pi: A\si \rtimes_{\alpha\si} G \to \prod_\cU A\sk \rtimes_{\alpha\sk} G$ s.t. $\tilde\pi(\lambda_g) = (\lambda_g)_\cU$ for all $g \in G$.

    Now, as $A\sk \subset A\sk \rtimes_{\alpha\sk} G$, we can regard $\pi$ as having codomain $$B = \prod_\cU A\sk \rtimes_{\alpha\sk}G.$$ $G$ acts on $B$ via the inner action $\beta$ defined by
    \begin{equation*}
        G \ni g \mapsto \operatorname{Ad}((\lambda_g)_\cU) \in \operatorname{Aut}(B).
    \end{equation*}

    The ambient strong convergence assumption ensures that $\pi$ is then $G$-equivariant, so $\pi$ extends to an embedding $\pi_1: A\si \rtimes_{\alpha\si} G \to B \rtimes_\beta G$ that acts as the identity map on $G$. Now, as $\beta$ is inner, $B \rtimes_\beta G$ is in fact isomorphic to $B \otimes C_r^\ast(G)$ by the isomorphism that acts as the identity on $B$ and sends
    \begin{equation*}
        B \rtimes_\beta G \ni \lambda_g \mapsto (\lambda_g)_\cU \otimes \lambda_g \in B \otimes C_r^\ast(G).
    \end{equation*}

    Since $G$ is exact,
    \begin{equation*}
        B \otimes C_r^\ast(G) = \left[\prod_\cU A\sk \rtimes_{\alpha\sk} G\right] \otimes C_r^\ast(G) \cong \prod_\cU \left[\left(A\sk \rtimes_{\alpha\sk} G\right) \otimes C_r^\ast(G)\right]
    \end{equation*}

    canonically. Now, the composition of $\pi_1$ with the two isomorphisms above extends $\pi$ and sends $\lambda_g$ to $(\lambda_g \otimes \lambda_g)_\cU$. Hence, the range is contained in the ultraproduct of the ranges of the comultiplication maps $A\sk \rtimes_{\alpha\sk} G \to \left(A\sk \rtimes_{\alpha\sk} G\right) \otimes C_r^\ast(G)$. Applying the inverses of the comultiplication maps yields an embedding of $A\si \rtimes_{\alpha\si} G$ into $B$, and it is straightforward to check that this is indeed the desired $\tilde\pi$.
\end{proof}

\subsection{Proof of Theorem \ref{intro thm 1}}

We recall the following standard group-theoretic fact. We include a proof here for the convenience of the reader.

\begin{prop}
    Let $G$ be a finitely generated residually finite group. Then $G$ admits a decreasing sequence of finite-index characteristic subgroups $H_n$ s.t. $\bigcap_n H_n = \{e\}$.
\end{prop}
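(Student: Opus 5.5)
The plan is to use the classical fact that a finitely generated group has only finitely many subgroups of any given finite index. From this, the intersection of all subgroups of a fixed index is a finite-index subgroup that is preserved by every automorphism.

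First, the finiteness. Let $G$ be generated by a finite set $S$ and fix $m \in \N$. Each subgroup $K<G$ of index $m$ gives a transitive action of $G$ on the coset space $G/K$. Labelling the cosets by $\{1,\dots,m\}$ with $K$ as $1$, this becomes a homomorphism $G \to \Sym(m)$, and $K$ is recovered as the stabilizer of $1$. Such a homomorphism is determined by the images of the elements of $S$, so there are at most $(m!)^{|S|}$ of them, hence only finitely many subgroups of index $m$.

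Next, define $K_m$ to be the intersection of all subgroups of $G$ of index at most $m$. This is a finite intersection of finite-index subgroups, so $K_m$ has finite index (by Poincaré's lemma, an intersection of two finite-index subgroups has finite index). Any automorphism $\theta$ of $G$ maps a subgroup of index $j$ to a subgroup of index $j$, so it permutes the family being intersected. Hence $\theta(K_m)=K_m$, and $K_m$ is characteristic.

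The sequence is decreasing, $K_{m+1}\subseteq K_m$, because the family of subgroups for $m+1$ contains the family for $m$.

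Finally, the intersection is trivial. If $g\neq e$, residual finiteness gives a finite-index normal subgroup $N$ with $g\notin N$. Setting $m=[G:N]$, we have $K_m\subseteq N$, so $g\notin K_m$. Taking $H_n=K_n$ therefore gives a decreasing sequence of finite-index characteristic subgroups with $\bigcap_n H_n=\{e\}$.

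The only substantive step is the finiteness count, which is exactly where finite generation is needed. Everything else is formal.
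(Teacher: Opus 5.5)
Your proof is correct and follows the same route as the paper. Finite generation bounds the number of transitive actions on $\{1,\dots,m\}$, hence the number of subgroups of index at most $m$, and their intersection $K_m$ is the desired characteristic subgroup; you also make the subgroup-to-action injection explicit (rather than asserting a one-to-one correspondence) and spell out the characteristic, decreasing, and trivial-intersection checks that the paper leaves to the reader.
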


\begin{proof}
    We note that $G$, being finitely generated, admits only finitely many transitive actions on $\{1, \cdots, n\}$ as any such action is determined by how the finitely many generators of $G$ act. Such transitive actions are in one-to-one correspondence with subgroups of index $n$. Hence, there are only finitely many subgroups of $G$ of index at most $n$, so $H_n$, the intersection of all such subgroups, is of finite index. It is easy to see $H_n$ satisfies the desired conditions.
\end{proof}

\begin{proof}[Proof of Theorem \ref{intro thm 1}]
    Let $H_n$ be a decreasing sequence of finite-index characteristic subgroups of $G$ s.t. $\bigcap_n H_n = \{e\}$. Then as $G$ is amenable, the $\ast$-homomorphism
    \begin{equation*}
        C_r^\ast(G) \to \prod_\cU C_r^\ast(G/H_n), \lambda_g \mapsto (\lambda_{gH_n})_\cU
    \end{equation*}
    is well-defined. Since it is trace-preserving, it is an embedding. As $H_n$ is characteristic, the action $\alpha$ of $L$ on $G$ descends to an action $\alpha_n$ of $L$ on $G/H_n$. One easily check, that $\alpha_n \to \alpha$ strongly. Hence, by Proposition \ref{prop: strong convergence upgrade}, we obtain an embedding
    \begin{equation*}
        C_r^\ast(G \rtimes L) = C_r^\ast(G) \rtimes L \hookrightarrow \prod_\cU (C_r^\ast(G/H_n) \rtimes L) = \prod_\cU (C_r^\ast(G/H_n \rtimes L)).
    \end{equation*}

    Note that this embedding is induced by a sequence of group homomorphisms $G \rtimes L \to G/H_n \rtimes L$, indeed the natural quotient map. We also have $G/H_n \rtimes L \hookrightarrow (G/H_n \rtimes \Aut(G/H_n)) \times L$ by sending $g \in G/H_n$ to $(g, e)$ and $l \in L$ to $(\alpha_n(l), l)$. Since $G/H_n \rtimes \Aut(G/H_n)$ is finite, the result follows.
\end{proof}

\subsection{Proof of Theorem \ref{intro thm 2}}

\begin{proof}[Proof of Theorem \ref{intro thm 2}]
    We directly prove the ``more generally" statement. Since $L \curvearrowright I$ is transitive, we may write $I = L/H$ for some $H < L$ and the action is simply given by left multiplication. Per our assumption, $H < L$ is separable, so there exists a decreasing sequence $H_n < L$ s.t. $H = \bigcap_n H_n$. Let $\pi_n: L/H \to L/H_n$ be the natural quotient maps. Now, we consider the embedding,
    \begin{equation*}
        C_r^\ast(\oplus_{L/H} G) \hookrightarrow \prod_\cU C_r^\ast(\oplus_{L/H_n} G)
    \end{equation*}
    induced by the sequence of group homomorphisms
    \begin{equation*}
        \bigoplus_{L/H} G \to \bigoplus_{L/H_n} G; (g_x)_{x \in L/H} \mapsto \left(\sum_{x \in L/H, \pi_n(x) = y} g_x\right)_{y \in L/H_n}.
    \end{equation*}
    
    This is well-defined and continuous because $\oplus_{L/H} G$ is abelian and thus amenable. It is an embedding because $\bigcap_n H_n = H$ so the sequence of homomorphisms is eventually separating and thus weakly converging. Thus, per Proposition \ref{prop: strong convergence upgrade},
    \begin{equation*}
        C_r^\ast(\oplus_{L/H} G \rtimes L) = C_r^\ast(\oplus_{L/H} G) \rtimes L \hookrightarrow \prod_\cU (C_r^\ast(\oplus_{L/H_n} G) \rtimes L) = \prod_\cU C_r^\ast(\oplus_{L/H_n} G \rtimes L)
    \end{equation*}
    where $L$ acts on $\oplus_{L/H_n} G$ by permuting the copies of $G$ via the left multiplication action $L \curvearrowright L/H_n$. Note that this embedding is induced by a sequence of group homomorphisms $\oplus_{L/H} G \rtimes L \to \oplus_{L/H_n} G \rtimes L$. We also have the action of $L$ on $\oplus_{L/H_n} G$ quotients through the finite group $\Sym(L/H_n)$, and we shall denote $\alpha_n: L \to \Sym(L/H_n)$. So,
    \begin{equation*}
        \oplus_{L/H_n} G \rtimes L \hookrightarrow (\oplus_{L/H_n} G \rtimes \Sym(L/H_n)) \times L
    \end{equation*}
    by acting as the identity on $\oplus_{L/H_n} G$ and sending $l \in L$ to $(\alpha_n(l), l)$. Since $G$ is abelian and residually finite, so is $\oplus_{L/H_n} G$ and thus $\oplus_{L/H_n} G$ is PFF. Hence, $\oplus_{L/H_n} G \rtimes \Sym(L/H_n)$ is a finite-index extension of a PFF group and thus PFF itself. The result follows.
\end{proof}

\subsection{Proof of Theorem \ref{intro thm 3}}

\begin{proof}[Proof of Theorem \ref{intro thm 3}]
    By free exactness \cite{skoufranis2015notion} (see also \cite{PisierRDP} and \cite{gao2026toeplitzexactnessstrongconvergence}), we have the embedding
    \begin{equation*}
        C_r^\ast(\F_n) \hookrightarrow \prod_{m \to \cU} C_r^\ast((\Z/m\Z)^k \ast \F_{n - k})
    \end{equation*}
    induced by the sequence of group homomorphisms sending each of the first $k$ standard free generators of $\F_n$ to the standard generator of the corresponding copy of $\Z/m\Z$. The action of $\Z$ on $\F_n$ descends to actions of $\Z$ on $(\Z/m\Z)^k \ast \F_{n - k}$, by multiplying by the same elements in $\F_k$ except quotienting down to $(\Z/m\Z)^k$. Since all the elements multiplied are powers of conjugates of the free generators, in $(\Z/m\Z)^k \ast \F_{n - k}$ they all have orders divisible by $m$. Thus, the action of $\Z$ quotients through $\Z/m\Z$, so by Proposition \ref{prop: strong convergence upgrade},
    \begin{equation*}
    \begin{split}
        C_r^\ast(\F_n \rtimes \Z) &\hookrightarrow \prod_{m \to \cU} C_r^\ast([(\Z/m\Z)^k \ast \F_{n - k}] \rtimes \Z)\\
        &\hookrightarrow \prod_{m \to \cU} C_r^\ast(([(\Z/m\Z)^k \ast \F_{n - k}] \rtimes \Z/m\Z) \times \Z).
    \end{split}
    \end{equation*}

    Note that the RHS groups are
    direct products of virtually free groups and $\Z$, and so are PFF. Since the embedding is induced by group homomorphisms, the result follows.
\end{proof}

As a corollary of the above, we also obtain that,

\begin{col}\label{col: one gen of braid}
    $\F_3 \rtimes \Z$ is PFF, where $\F_3 = \langle a, b, c \rangle$ and $\Z$ acts by $a \mapsto a$, $b \mapsto b^{-1}c^{-1}bcb$, $c \mapsto b^{-1}cb$.
\end{col}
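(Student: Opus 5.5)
The plan is to reduce the statement directly to Theorem \ref{intro thm 3} by a change of free basis of $\F_3$. The group $\F_3 \rtimes \Z$ depends only on the automorphism $\phi$ of $\F_3$, not on the basis used to describe it. So it suffices to find a free basis of $\F_3$ that splits as $\F_k \ast \F_{n-k}$, in which $\phi$ fixes the generators of the first factor and multiplies the generators of the second factor on the left and/or right by powers of conjugates (in $\F_k$) of generators of $\F_k$.

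First, I would look for elements fixed by $\phi$ besides $a$. A natural candidate is the product $y = cb$. A direct computation gives
\begin{equation*}
\phi(cb) = b^{-1}cb \cdot b^{-1}c^{-1}bcb = cb,
\end{equation*}
so $y$ is fixed.

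Next, I would rewrite the image of $b$ in terms of $y$: $\phi(b) = b^{-1}c^{-1}\,b\,cb = y^{-1} b y$.

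Then I would check that $\{a, y, b\}$ is a free basis of $\F_3$. This holds because it is obtained from $\{a,b,c\}$ by the Nielsen transformation $c \mapsto cb$, which is invertible via $c = yb^{-1}$.

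In this basis, I would take $\F_k = \langle a, y\rangle$ with $k=2$ and $\F_{n-k} = \langle b \rangle$. The automorphism fixes $a$ and $y$, and sends the free generator $b$ to $y^{-1} b y$. That is, $b$ is multiplied on the left by $y^{-1}$ and on the right by $y$, each a power of a generator of $\F_k$ (trivially a conjugate). This is exactly the form required in Theorem \ref{intro thm 3}, so the group is PFF.

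The only real step is finding the right basis change, namely spotting that $cb$ is $\phi$-invariant. Everything afterwards is a routine verification.
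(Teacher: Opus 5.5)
Your proposal is correct and follows essentially the paper's route: a Nielsen change of basis in which $a$ and $y = cb$ are fixed, followed by an application of Theorem \ref{intro thm 3}. The only difference is the third basis element. The paper takes $z = ba$ and obtains $z \mapsto y^{-1} z x^{-1} y x$, a right multiplication by the conjugate $x^{-1}yx$. Your choice of $b$ gives the slightly cleaner $b \mapsto y^{-1} b y$, which satisfies the hypothesis of Theorem \ref{intro thm 3} equally well.
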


\begin{proof}
    Re-choose the generators of $\F_3$ by defining,
    \begin{equation*}
        x = a, \; y = cb, \; z = ba.
    \end{equation*}

    Then one may directly verify that $\Z$ acts by
    \begin{equation*}
        x \mapsto x, y \mapsto y, z \mapsto y^{-1}zx^{-1}yx.
    \end{equation*}

    The result now follows from Theorem \ref{intro thm 3}.
\end{proof}

\subsection{Proof of Theorem \ref{intro thm 4}}

\begin{proof}[Proof of Theorem \ref{intro thm 4}]
    We start by using the fact that $\F_k$ is PFF \cite{BordenaveCollins2019}, i.e., there is a sequence of representations $\F_k \to U(m)$ having finite ranges that strongly converges to the left regular representation. To put it another way
    \begin{equation*}
        C_r^\ast(\F_k) \hookrightarrow \prod_\cU \M_m(\C)
    \end{equation*}
    induced by group homomorphisms $\F_k \to U(m)$ with finite ranges. By free exactness,
    \begin{equation*}
        C_r^\ast(\F_n) = C_r^\ast(\F_k \ast \F_{n - k}) \hookrightarrow \prod_{m \to \cU} \M_m(\C) \ast C_r^\ast(\F_{n - k}).
    \end{equation*}

    The action of $\Z$ on $C_r^\ast(\F_n)$ descends down to actions on $\M_m(\C) \ast C_r^\ast(\F_{n - k})$ by multiplying the standard free generators of $\F_{n - k}$ by the unitaries in $\M_m(\C)$ corresponding to the words in $\F_k$ that are multiplied in the original action. Since the group homomorphisms $\F_k \to U(m)$ have finite ranges, all these generators have finite orders, so the action of $\Z$ has some finite order $r(m)$ as well. Whence, by Proposition \ref{prop: strong convergence upgrade},
    \begin{equation*}
    \begin{split}
        C_r^\ast(\F_n \rtimes \Z) &\hookrightarrow \prod_{m \to \cU} [\M_m(\C) \ast C_r^\ast(\F_{n - k})] \rtimes \Z\\
        &\hookrightarrow \prod_{m \to \cU} ([\M_m(\C) \ast C_r^\ast(\F_{n - k})] \rtimes [\Z/r(m)\Z]) \otimes C_r^\ast(\Z).
    \end{split}
    \end{equation*}

    The result follows.
\end{proof}

\subsection{Proof of Theorem \ref{intro thm 5}}

\begin{proof}[Proof of Theorem \ref{intro thm 5}]
    Since $L \curvearrowright I$ is transitive, we may write $I = L/H$ for some $H < L$ and the action is simply given by left multiplication. Per our assumption, $H < L$ is separable, so there exists a decreasing sequence $H_n < L$ s.t. $H = \bigcap_n H_n$. Let $\pi_n: L/H \to L/H_n$ be the natural quotient maps. Now, we fix an increasing sequence of finite subsets $F_k \subset L/H$ s.t. $\bigcup_k F_k = L/H$. Since $H = \bigcap_n H_n$, we may choose an increasing sequence $n_k$ s.t. $\pi_{n_k}$ is injective when restricted to $F_k$. Let $E_k: \otimes_{L/H} A \to \otimes_{F_n} A$ be the expectations. Let $\iota_k: \otimes_{F_n} A \to \otimes_{L/H_{n_k}} A$ be the embeddings induced by $\pi_{n_k}|_{F_k}$. Then we have
    \begin{equation*}
        \bigotimes_{L/H} A \to \prod_\cU \left(\bigotimes_{L/H_{n_k}} A\right)
    \end{equation*}
    induced by $\iota_k \circ E_k$. Since $\bigcup_k F_k = L/H$, it is easy to see that the map is multiplicative and in fact an embedding. Let $\alpha: L \curvearrowright \otimes_{L/H} A$ be the action induced by left multiplication $L \curvearrowright L/H$ and similarly for $\alpha_n: L \curvearrowright \otimes_{L/H_n} A$. Then it is easy to check $\alpha_n \to \alpha$ strongly. Per Proposition \ref{prop: strong convergence upgrade},
    \begin{equation*}
        \left(\bigotimes_{L/H} A\right) \rtimes L \hookrightarrow \prod_\cU \left(\bigotimes_{L/H_{n_k}} A\right) \rtimes L.
    \end{equation*}

    It is standard that
    \begin{equation*}
        \left(\bigotimes_{L/H_{n_k}} A\right) \rtimes L \hookrightarrow \left[\left(\bigotimes_{L/H_{n_k}} A\right) \rtimes \Sym(L/H_{n_k})\right] \otimes C_r^\ast(L)
    \end{equation*}
    by sending $\lambda_l \to \lambda_{l'} \otimes \lambda_l$ where $l' \in \Sym(L/H_{n_k})$ is the permutation given by left multiplication by $l$. Since $\Sym(L/H_{n_k})$ is finite, the RHS is MF, so the result follows. The free version can be proved analogously, where we instead use free exactness to show free products of MF algebras are MF.
\end{proof}

\subsection{Proof of Theorem \ref{intro thm 6}}

\begin{defn}\label{defn: res fin action}
    Let $\alpha: G \curvearrowright \Gamma$ be an action of a countable group on a simplicial graph $\Gamma = (V, E)$. We say $\alpha$ is \emph{residually finite} if, for each finite subset $F \subset G$ and $W \subset V$, there exists a finite graph $\Theta$, a finite set $A$, a group homomorphism $\varphi: G \to \Sym(A)$, and, for each $a \in A$, a graph embedding
    \begin{equation*}
        \pi_a: (W, E|_{W \times W}) \hookrightarrow \Theta
    \end{equation*}
    s.t. $\pi_{\varphi(g)a}(v) = \pi_a(\alpha(g^{-1})v)$ for all $a \in A$, $g \in F$, $v \in W$, whenever $\alpha(g^{-1})v \in W$.
\end{defn}

\begin{rmk}
    Note that the above definition is a strengthening of the definition of sofic actions on graphs in \cite{gao2024soficactionsgraphs}, by requiring $\epsilon = 0$ and $\varphi$ to be an actual group homomorphism. In case $\Gamma$ is either complete or totally disconnected, the definition is a strengthening of sofic actions on sets (namely, the vertex set) in \cite{gao2025soficity}, again by requiring $\epsilon = 0$ and $\varphi$ to be an actual group homomorphism.
\end{rmk}

\begin{examples}\label{exs: res fin action}
    The following actions are residually finite:
    \begin{enumerate}
        \item For the purposes of this example, a Cayley graph $\Gamma=\Gamma(G,S)$ of a group $G$ with respect to a symmetric set $S$ has vertex set 
        given by elements of $g$, with edges corresponding to unordered pairs $\{g,h\}$ where $g^{-1}h \in S$.
        Note that $S$ is allowed to be infinite. More drastically, we allow for the possibility that $S$ \emph{does not} generate $G$, so that $\Gamma$ is allowed to be disconnected.
        
        The action of a residually finite group $G$ on any of its Cayley graphs in the above sense is residually finite. This follows from picking $\Theta$ to be some appropriately chosen Cayley graph of a sufficiently large finite quotient group $G/H$, $A = G/H$, and $\pi_a$ to be the composition of the quotient map $G \to G/H$ with left-multiplication by $a^{-1}H$. In fact, $G$ being residually finite is equivalent to the action of $G$ on some, equivalently all, its Cayley graphs being residually finite. We leave the reader to check the details;
        \item The left multiplication action of $G$ on $G/H$, equipped with either the complete graph or the totally disconnected graph structure, where $H < G$ is separable. This follows from Proposition 5 and the remark preceding Proposition 8 in \cite{Gao2025};
        \item Any action of a LERF group $G$ on any complete or totally disconnected graph \cite[Proposition 8]{Gao2025};
        \item Any action of a free group on any graph. This follows from the proof of \cite[Theorem 2.14]{gao2024soficactionsgraphs}.
    \end{enumerate}
\end{examples}

\begin{proof}[Proof of Theorem \ref{intro thm 6}]
    Fix increasing sequences of finite subsets $F_n \subset H$ and $W_n \subset V$ s.t. $\bigcup_n F_n = H$ and $\bigcup_n W_n = V$. For each $n$, fix a finite graph $\Theta_n$, a finite set $A_n$, a group homomorphism $\varphi_n: H \to \Sym(A_n)$, and, for each $a \in A$, a graph embedding $\pi_{n, a}: (W_n, E|_{W_n \times W_n}) \hookrightarrow \Theta_n$ as in the definition of residually finite actions.

    We have the following natural embeddings,

    \begin{equation*}
        C_r^\ast(\star_\Gamma G) \hookrightarrow \prod_\cU C_r^\ast(\star_{(W_n, E|_{W_n \times W_n})} G) \hookrightarrow \prod_\cU \M_{|A_n|}(C_r^\ast(\star_{\Theta_n} G))
    \end{equation*}
    where the first embedding is by noting that $\star_{(W_n, E|_{W_n \times W_n})} G$ is an increasing sequence of subgroups of $\star_\Gamma G$ whose union is the entire group, so the embedding may be defined by taking the sequence of expectations associated to these subgroups; and where the second embedding is defined by noting that, for each $a \in A_n$, $\pi_{n, a}$ induces a group embedding $\star_{(W_n, E|_{W_n \times W_n})} G \hookrightarrow \star_{\Theta_n} G$ and thus a $C^\ast$-algebraic embedding $C_r^\ast(\star_{(W_n, E|_{W_n \times W_n})} G) \hookrightarrow C_r^\ast(\star_{\Theta_n} G)$, and one then uses these embeddings to send an element of $C_r^\ast(\star_{(W_n, E|_{W_n \times W_n})} G)$ to a diagonal matrix with entries in $C_r^\ast(\star_{\Theta_n} G)$.

    Note that $\varphi_n: H \to \Sym(A_n)$ induces an action of $H$ on $\M_{|A_n|}(C_r^\ast(\star_{\Theta_n} G))$ by going through permutation matrices. The definition of residually finite actions ensures these actions strongly converge to the action of $H$ on $C_r^\ast(\star_\Gamma G)$. Thus, by \ref{prop: strong convergence upgrade}, we have the embedding,

    \begin{equation*}
        C_r^\ast(\star_\Gamma G \rtimes H) = C_r^\ast(\star_\Gamma G) \rtimes H \hookrightarrow \prod_\cU \M_{|A_n|}(C_r^\ast(\star_{\Theta_n} G)) \rtimes H.
    \end{equation*}

    It is standard that
    \begin{equation*}
        (\M_{|A_n|}C_r^\ast(\star_{\Theta_n} G)) \rtimes H \hookrightarrow \left[\M_{|A_n|}(C_r^\ast(\star_{\Theta_n} G)) \rtimes \Sym(A_n)\right] \otimes C_r^\ast(H)
    \end{equation*}
    by sending $\lambda_h \to \lambda_{\varphi_n(h)} \otimes \lambda_h$. By \cite[Corollary 1.3]{gao2026newsource}, $\star_{\Theta_n} G$ is MF and thus $\M_{|A_n|}(C_r^\ast(\star_{\Theta_n} G)) \rtimes \Sym(A_n)$ is MF as $A_n$ is finite. The result follows.
\end{proof}

\bibliographystyle{amsalpha}
\bibliography{bibliography}

\end{document}